\newtheorem{thm}{Theorem}[section]
\newtheorem{lem}[thm]{Lemma}
\newtheorem{cor}[thm]{Corollary}
\newtheorem{remark}[thm]{Remark}
\renewenvironment{proof}[1][\proofname]{%
   \par\pushQED{\qed}\normalfont%
   \topsep6\p@\@plus6\p@\relax
   \trivlist\item[\hskip\labelsep\bfseries#1\@addpunct{.}]%
   \ignorespaces
}{%
   \popQED\endtrivlist\@endpefalse
}
\definecolor{applegreen}{rgb}{0.55, 0.71, 0.0}
\title{A new twist on modular links from an old perspective}
\author{Khanh Le}
\begin{document}
\maketitle
\begin{abstract}
     We show that the complement of arithmetic modular links found in \cite{MiguelesPinskyPurcell} is homeomorphic to the complement of augmented chainlinks. In particular, these link complements arise as $n$-fold cyclic covers of the Whitehead link complement.  
\end{abstract}

\section{Introduction}
    The \emph{modular surface} $\Sigma_{\Mod}$ is an orbifold obtained as the quotient space of the hyperbolic plane $\mathbb{H}^2$ by the modular group $\PSL_2(\mathbb{Z})$. Since the action of $\PSL_2(\mathbb{Z})$ on $\mathbb{H}^2$ is by orientation-preserving isometries, $\Sigma_{\Mod}$ is an oriented 2-orbifold equipped with a hyperbolic metric. Any closed oriented geodesic $\overline{\gamma}(t)$ on $\Sigma_{\Mod}$ has a canonical lift $\gamma(t) := (\overline{\gamma}(t),\overline{\gamma}'(t))$ to the unit tangent bundle $\UT(\Sigma_{\Mod})$. Milnor showed that $\UT(\Sigma_{\Mod})$ is homeomorphic to the complement of the trefoil knot $T_{2,3}$ in $S^3$ \cite{MilnorKTheory}. Therefore, every nonempty finite collection of canonical lifts $ \Gamma \subset \UT(\Sigma_{\Mod})$ of oriented closed geodesics in $\Sigma_{\Mod}$ together with the trefoil knot determines a $n+1$-component link $\Gamma\cup \{T_{2,3}\}$ in $S^3$ for $n\geq 1$. Following Ghys \cite{GhysICM06}, we refer to the collection $\Gamma$, without the trefoil knot, as a \emph{modular link} when $|\Gamma| \geq 2$ and \emph{modular knot} when $|\Gamma| = 1$. Here $|\cdot|$ denotes the number of connected components. The \emph{complement of modular links} refers to $M_{\Gamma} := \UT(\Sigma_{\Mod})\setminus \Gamma$. For emphasis, the complement of the modular link $\Gamma$ is the complement of $(n+1)$ -- component link in $S^3$ where $|\Gamma| = n$ is the number of components of the modular link.

    
    Modular links have attracted attention of mathematicians due to their connections to dynamics, low-dimensional topology and number theory. For example, in \cite{GhysICM06}, Ghys showed that the isotopy classes of modular knots coincide with the isotopy classes of Lorenz knots which are periodic orbits of a 3-dimensional differential equation \cite{BirmanWilliams}. Furthermore, Ghys proved that the linking number in $S^3$ between the canonical lift $\gamma$ to $\UT(\Sigma_{\Mod})$ of an oriented closed geodesic $\overline{\gamma}$ in $\Sigma_{\Mod}$ and the trefoil knot $T_{2,3}$ is given by the Rademacher function, a classical arithmetic function coming from number theory \cite{GhysICM06}. The latter result has been generalized to the setting of arbitrary $(p,q,\infty)$-triangle group in \cite{MatsusakaUeki23}. 
    
    The complement of modular links $M_\Gamma $ is known to be hyperbolic \cite{FoulonHasselBlatt13}. More recently, there have been many works relating the hyperbolic volume to the length of the geodesics \cite{BergeronPinskySilberman, CremaschiMigueles, CremaschiMiguelesYarmola, Migueles}. Recently, Migueles, Pinsky and Purcell \cite{MiguelesPinskyPurcell} found an infinite family $\mathcal{F}\Sigma_{\Mod}$ of modular links whose complement $M_\Gamma$ admits an arithmetic hyperbolic structure \cite[Theorem 1.1]{MiguelesPinskyPurcell}. For any $n \geq 3$, the family $\mathcal{F}\Sigma_{\Mod}$ contains at least two modular links with $n$-component. See \cref{subsec:MPPConstruction} for a precise parametrization of modular links in $\mathcal{F}\Sigma_{\Mod}$ in terms of the Farey graph. The hyperbolic structures of $M_\Gamma$ for any collection $\Gamma$ in $\mathcal{F}\Sigma_{\Mod}$ are all commensurable to that of the Bianchi orbifold $\mathbb{H}^3/\PSL_2(\mathbb{Z}[i])$. Furthermore, there is a unique modular knot $\Gamma_0$ in the family $\mathcal{F}\Sigma_{\Mod}$. The complement $M_{\Gamma_0}$ is known to be homeomorphic to that of the Whitehead link \cite{MiguelesPinskyPurcell}. In general, it is an open question that $\Gamma_0$ is the only arithmetic modular knot \cite{MiguelesPinskyPurcell}.  

    The main result of this paper is to explicitly identify the complements of modular links in $\mathcal{F}\Sigma_{\Mod}$ as the complements of augmented chainlinks in $S^3$. These \emph{augmented chain link complements} $S^3\setminus C_{n}$ can be obtained by taking the $n$-fold cyclic cover branched over the unknotted component of the Whitehead link $S^3\setminus C_1$, see \cref{fig:WHL-AugmentedChainLink}. In particular, $C_n$ is a link in $S^3$ with $n+1$ components.

      \begin{figure}
        \centering
        \includegraphics[scale = 0.3]{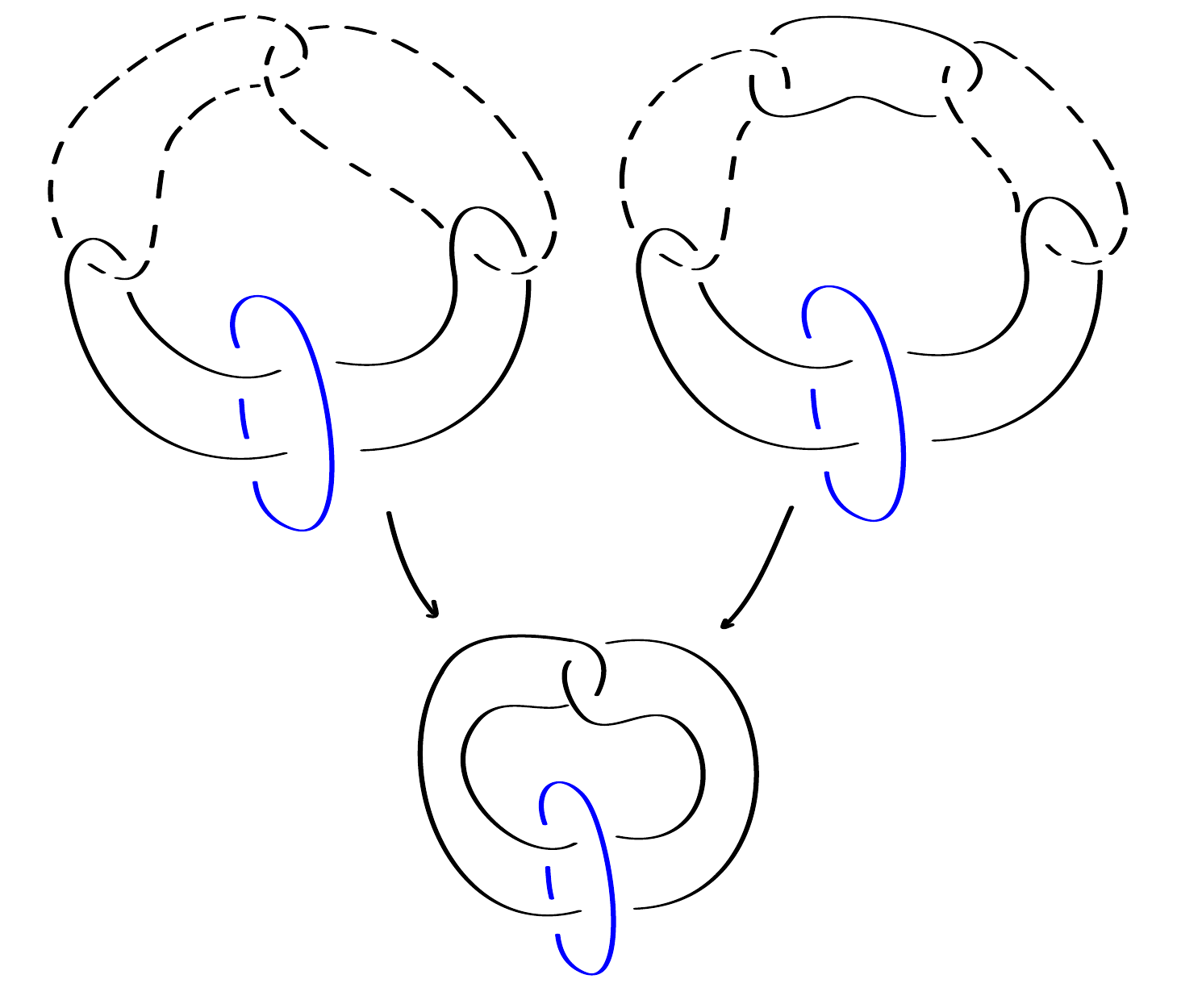}
        \caption{The Whitehead link $S^3\setminus C_1$, a 3-fold cyclic cover $S^3 \setminus C_3$ (top left) and a 4-fold cyclic cover $S^3\setminus C_4$ (top right) both branched over the blue component. Forgetting the dotted components in $S^3\setminus C_3$ and $S^3 \setminus C_4$, we obtain two split links whose complement are handlebodies of genus 2 and 3 respectively. Consequently, we obtain surjective homomorphisms $\pi_1(S^3\setminus C_3) \rightarrow F_2$ and $\pi_1(S^3\setminus C_4) \to F_3$ in each case.}
        \label{fig:WHL-AugmentedChainLink}
    \end{figure}  
    
    \begin{thm}
    \label{thm:ComplementArithmeticModularLinks}
        Let $\Gamma \in \mathcal{F}\Sigma_{\Mod}$ be an $n$-component modular link. The complement $M_\Gamma$ is homeomorphic to the complement $S^3 \setminus C_{n}$.
    \end{thm}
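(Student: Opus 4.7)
The plan is to realize $M_\Gamma$ as an $n$-fold cyclic cover of $M_{\Gamma_0}$ and, via the known homeomorphism $M_{\Gamma_0}\cong S^3\setminus C_1$, identify this cover with the $n$-fold cyclic cover of the Whitehead link complement whose completion is the cyclic branched cover of $S^3$ over the unknotted augmentation component of $C_1$. By definition this latter cover, restricted to link complements, is $S^3\setminus C_n$, so matching the two covers by their peripheral data will yield the desired homeomorphism.

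The first step is to use the Farey graph parameterization of $\mathcal{F}\Sigma_{\Mod}$ recalled in \cref{subsec:MPPConstruction} to exhibit, for each $n$-component $\Gamma$, an $n$-fold cyclic symmetry of $\Sigma_{\Mod}$ that permutes the components of $\Gamma$. I expect this symmetry to be realized by a suitable power of the parabolic element $T\in\PSL_2(\mathbb{Z})$ fixing the cusp at infinity. Passing to the cyclic cover $\widetilde{\Sigma}\to\Sigma_{\Mod}$ that unwinds the cusp at infinity $n$-fold, the induced cover of unit tangent bundles restricts to a cyclic $n$-fold cover $M_\Gamma\to M_{\Gamma_0}$ whose peripheral profile sends the meridian at the $\Gamma_0$-cusp to $0$ and the meridian at the cusp inherited from $\Sigma_{\Mod}$ to a generator of $\mathbb{Z}/n\mathbb{Z}$. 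On the chain link side, the cover $S^3\setminus C_n\to S^3\setminus C_1$ has exactly the same profile: trivial on the meridian of the non-augmented component and generating on the meridian of the blue augmentation. Since cyclic covers of $M_{\Gamma_0}$ are classified by the images of the two peripheral meridians in $\mathbb{Z}/n\mathbb{Z}$ (as $H_1(M_{\Gamma_0})\cong\mathbb{Z}^2$ with basis given by these meridians), matching the cusp of $\UT(\Sigma_{\Mod})$ at infinity with the blue component of $C_1$ completes the identification.

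The main obstacle is the first step. Because $\mathcal{F}\Sigma_{\Mod}$ contains at least two inequivalent $n$-component links for $n\geq 3$ while the theorem asserts a single homeomorphism type, one must extract the $n$-fold cyclic symmetry from each Farey parameterization \emph{uniformly}: distinct Farey configurations produce different lifts of the reference geodesic $\overline{\gamma}_0$, but all must descend to the same quotient modular knot under the corresponding cover. A secondary but still delicate point is pinning down the cusp correspondence in $M_{\Gamma_0}\cong S^3\setminus C_1$; this can be done by comparing meridian-longitude slopes determined geometrically, since the blue cusp's meridian bounds a disk in $S^3$ while the cusp of $\UT(\Sigma_{\Mod})$ at infinity carries a canonical parabolic framing from $\UT(\Sigma_{\Mod})\cong S^3\setminus T_{2,3}$.
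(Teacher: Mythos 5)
The core of your plan does not survive scrutiny. You want to realize $M_\Gamma$ as an $n$-fold cyclic cover of $M_{\Gamma_0}$ by finding ``an $n$-fold cyclic symmetry of $\Sigma_{\Mod}$ that permutes the components of $\Gamma$,'' and then passing to a cover $\widetilde{\Sigma}\to\Sigma_{\Mod}$ unwinding the cusp. There are two problems. First, $\Sigma_{\Mod}$ has no nontrivial orientation-preserving isometries ($\PSL_2(\mathbb{Z})$ is a maximal Fuchsian group), so there is no such symmetry of the base orbifold acting on $\Gamma$. Second, if instead you pass to the $n$-fold cyclic cover unwinding the cusp, the resulting unit tangent bundle is a genuinely different $3$-manifold from $\UT(\Sigma_{\Mod})$: drilling the preimage of $\Gamma_0$ out of it gives the manifold called $M_n$ in the paper, which lives in a different ambient space than $M_\Gamma$. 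The assertion that this ``restricts to a cyclic $n$-fold cover $M_\Gamma\to M_{\Gamma_0}$'' therefore does not even typecheck: you would need a covering map whose domain is $M_\Gamma\subset\UT(\Sigma_{\Mod})$, not a subset of $\UT(\widetilde{\Sigma})$. Proving $M_\Gamma\cong M_n$ \emph{is} the theorem; it cannot be read off from the cusp-unwinding cover, and nothing in your argument supplies a $\mathbb{Z}/n$ action on $M_\Gamma$ itself.

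Your homological analysis in the second paragraph (matching the two cyclic covers by their effects on peripheral meridians) is a reasonable sanity check and would indeed classify cyclic covers \emph{of} $M_{\Gamma_0}$, but it is moot until $M_\Gamma$ has been exhibited as one. You do flag the real difficulty yourself in the final paragraph, but framing it as ``extracting the symmetry uniformly'' does not point toward a fix. The paper instead works on the $6$-fold cover $\UT(\Sigma_{1,1})$, writes $M_\Gamma$ as a once-punctured torus bundle over $S^1$ with $n$ fibered curves drilled out, cuts this bundle open along an annular fiber into $n$ pieces $N_{\gamma_i,\gamma_{i+1}}$, and invokes \cref{lem:Nalphabeta} to identify each piece with the standard piece $N_{0,1}$. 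Gluing those identifications and observing that the resulting discrepancy in the monodromy is a power of a Dehn twist on an \emph{open} annulus (hence isotopic to the identity) produces the homeomorphism $M_\Gamma\cong M_n$, after which \cref{lem:HomeoToAugChainLink} identifies $M_n$ with $S^3\setminus C_n$. This piece-by-piece normalization is exactly the step your proposal is missing.
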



    Using the work of Cooper and Long \cite{CooperLong93}, we obtain the following corollary of \cref{thm:ComplementArithmeticModularLinks}:
    \begin{cor}
    \label{cor:MostArithmeticModularLinksAreLarge}
        Let $\Gamma \in \mathcal{F}\Sigma_{\Mod}$. The complement $M_\Gamma$ fibers. Furthermore, if $|\Gamma| \not\in \{1,2,3,5\}$, then the complement $M_\Gamma$ contains a closed embedded essential surface. 
    \end{cor}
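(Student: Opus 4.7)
\emph{Plan.} By \cref{thm:ComplementArithmeticModularLinks} I may replace $M_\Gamma$ by the augmented chain link complement $S^3\setminus C_n$, where $n=|\Gamma|$, and prove both statements there.

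For the fibering claim, the plan is to descend to the Whitehead link complement $S^3\setminus C_1$, which is a classical example of a fibered $3$-manifold. The branched cover description used to construct $C_n$ (see \cref{fig:WHL-AugmentedChainLink}) exhibits $S^3\setminus C_n$, away from the branch locus, as an honest unbranched $n$-fold cyclic cover of $S^3\setminus C_1$. A fibration of $S^3\setminus C_1$ corresponds by Stallings' fibration theorem to an epimorphism $\phi\colon \pi_1(S^3\setminus C_1)\twoheadrightarrow \mathbb{Z}$ with finitely generated kernel. Viewing $\pi_1(S^3\setminus C_n)$ as a finite-index subgroup of $\pi_1(S^3\setminus C_1)$, the restriction of $\phi$ surjects onto some $d\mathbb{Z}\cong\mathbb{Z}$ and has kernel equal to a finite-index subgroup of $\ker\phi$, which is therefore still finitely generated. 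Applying Stallings in the reverse direction yields a fibration of $S^3\setminus C_n$, hence of $M_\Gamma$.

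The closed essential surface claim is more delicate, and I expect this to be the main obstacle. One must produce a closed embedded incompressible and non-boundary-parallel surface in $S^3\setminus C_n$ for every $n\notin\{1,2,3,5\}$, and separately rule out the existence of such a surface in the four exceptional cases. For the positive direction, a natural strategy is to exploit the cyclic $\mathbb{Z}/n$-symmetry of $(S^3,C_n)$ and assemble a $\mathbb{Z}/n$-equivariant closed surface from totally geodesic pieces in a right-angled ideal polyhedral decomposition of $S^3\setminus C_n$; such a decomposition is available because the augmented chain link sits inside the broader family of fully augmented links whose geometric decomposition has been made explicit (following Agol and Purcell). The short exceptional list $\{1,2,3,5\}$ points to a rigidity phenomenon: these should correspond to the $n$ for which $S^3\setminus C_n$ appears on the finite list of small cusped hyperbolic manifolds in the commensurability class of the Bianchi orbifold $\mathbb{H}^3/\PSL_2(\mathbb{Z}[i])$. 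The case $n=1$ is the Whitehead link complement, which is classically small; for $n\in\{2,3,5\}$, I would verify smallness either by invoking a classification of small link complements in this commensurability class or by an explicit SnapPy computation.
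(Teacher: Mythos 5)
The fibering half of your plan matches the paper's: Theorem~\ref{thm:ComplementArithmeticModularLinks} exhibits $M_\Gamma$ as an $n$-fold cyclic cover of the Whitehead link complement $M_{\Gamma_0}$, and a finite cover of a fibered $3$-manifold fibers. Your Stallings-theorem phrasing is a correct way to make this precise; the paper just states the conclusion directly. No issue there.

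For the closed essential surface claim your route diverges from the paper's, and more importantly it is not actually carried out. The paper's argument is via Cooper--Long \cite{CooperLong93}: deleting roughly half of the circular components of $C_n$ yields a handlebody complement, which gives a surjection $\pi_1(S^3\setminus C_n)\twoheadrightarrow F_{k+1}$ (with $n=2k$ or $n=2k+1$, $k\geq 1$). Pulling back irreducible $\SL_2(\mathbb C)$-representations of the free group produces a component of the character variety of dimension $3k$, and whenever $3k$ exceeds the number of cusps $n+1$ (equivalently $k>1$ in the even case, $k>2$ in the odd case, i.e.\ $n\notin\{1,2,3,5\}$), \cite[Theorem 4.1]{CooperLong93} directly produces a closed embedded essential surface. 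Your proposed alternative---assembling a $\mathbb Z/n$-equivariant closed surface from totally geodesic faces of a right-angled ideal polyhedral decomposition---is only a heuristic: you do not specify the surface, nor explain why the assembled closed surface would be embedded, incompressible, and not boundary-parallel (the pieces in such decompositions typically meet the cusps, so producing a \emph{closed} surface from them is genuinely nontrivial). You flag this yourself as ``the main obstacle,'' and as written it is a gap, not a proof.

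There is also a misreading of the statement: the corollary is a one-directional ``if,'' so you are not required to ``rule out the existence of such a surface in the four exceptional cases.'' The values $\{1,2,3,5\}$ arise purely as the parameters for which the Cooper--Long dimension count fails ($3k\le n+1$); the paper makes no claim that $S^3\setminus C_n$ is small for those $n$, and proving smallness would be a separate (and harder) task not needed here.
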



The fact that the complement of modular links fibers was shown by Dehornoy in \cite{Dehornoy15}. In fact, Dehornoy proved a much more general fact: the complement of every finite collection of periodic orbits of the geodesic flow on the unit tangent bundle of the triangle orbifold $(p,q,\infty)$ fibers \cite[Corollary 1.5]{Dehornoy15}.

As a final remark, modular knots, without the trefoil component, considered in  \cite{MiguelesPinskyPurcell} are examples of Berge knots, namely the family of knots which lie as simple closed curves on the fiber of the trefoil knot complement. Chainlinks have also played an important role in the study of the topology and geometry of these Berge knots. In particular, Baker gave a surgery description of Berge knots on the fiber of the trefoil knot using chain links \cite[Proposition 3.1]{Baker08}. Using this, he proved that this family of Berge knots contains hyperbolic knots with arbitrary large volume \cite[Theorem 4.1]{Baker08}. As a consequence, there is no surgery description for these Berge knots on a single link in $S^3$ \cite{Baker08}.

\subsection{Acknowledgement} The author would like to thank Alan Reid for helpful conversations, his enthusiasm for the project, and for his comments on an earlier draft of the paper. The author would also like to thank Neil Hoffman and Ken Baker for some helpful conversations regarding \cite{Baker08}. \cref{fig:3CompArithmeticModularLink} is drawn using Snappy \cite{SnapPy}. The author thanks the anonymous referee for pointing out some mistakes in the previous version of the paper and for making suggestions that have improved the exposition. 

\section{Preliminaries}

We begin by stating some definitions, collecting some standard facts about modular links.  

\subsection{Definitions and background}
The \emph{modular surface}, $\Sigma_{\Mod}$, is the quotient space of $\mathbb{H}^2$ by the group $\PSL_2(\mathbb{Z})$. This group is generated by two elliptic isometries: $U$ which rotates about $i$ by an angle of $\pi$, and $V$ which rotates about $\displaystyle \frac{1+i\sqrt{3}}{2}$ by an angle of $2\pi/3$. As an element of $\PSL_2(\mathbb{Z})$, $U$ and $V$ have the form:
\[
U = \pm \begin{pmatrix}
    0 & -1 \\ 1 & 0
\end{pmatrix} \quad V = \pm\begin{pmatrix}
    0 & -1 \\ 1 & -1 
\end{pmatrix}.
\]
A fundamental domain of for the action of $\PSL_2(\mathbb{Z})$ is the triangle with a real vertex at $\displaystyle \frac{1+i\sqrt{3}}{2}$ and two ideal vertices at $0$ and $\infty$, see \cref{fig:ModularDomain}.
\begin{figure}[h!]
    \centering
    \begin{tikzpicture}[scale =2,thick]
\draw[-] (-2,0) -- (2,0);
\draw[-] (0,0) -- (0,2);
\draw[-] (0.5,{sqrt(3)/2}) -- (0.5,2);
\draw[-] (0,0) arc (180:120:1);
\end{tikzpicture}
    \caption{A fundamental domain of $\Sigma_{\Mod}$ in $\mathbb{H}^2$.}
    \label{fig:ModularDomain}
\end{figure}
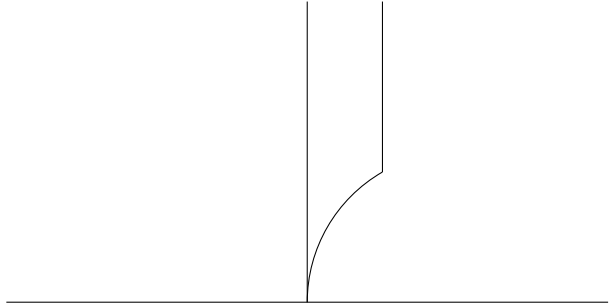
The hyperbolic metric on $\mathbb{H}^2$ descends to a hyperbolic metric on $\Sigma_{\Mod}$ with two points of cone angles $\pi$ and $2\pi/3$ and a single cusp. An oriented simple closed geodesic on $\Sigma_{\Mod}$ corresponds to a conjugacy class of a primitive hyperbolic elements in $\PSL_2(\mathbb{Z})$. Each oriented simple closed geodesic $\gamma$ on $\Sigma_{\Mod}$ has a representative in the corresponding conjugacy class   that admits a factorization into a product of
\[
L = \pm \begin{pmatrix}
    1 & 1 \\ 0 & 1
\end{pmatrix} \quad R = \pm \begin{pmatrix}
    1 & 0 \\ 1 & 1
\end{pmatrix}.
\]
Therefore, we associate to an oriented simple closed geodesic $\gamma$ in $\Sigma_{\Mod}$: a word, $w_\gamma$, in the positive powers of $L$ and $R$ that is not a power of any subword. The correspondence between $\gamma$ and $w_\gamma$ is well-defined up to a cyclic permutation of $w_\gamma$. 

Since $\Sigma_{\Mod}$ comes equipped with a hyperbolic metric, there exists a natural flow on the unit tangent bundle $\UT(\Sigma_{\Mod})$ which is the geodesic flow $\Psi_t$ defined as follows. Given a pair of a point and a unit vector based at the point, $(x,v) \in \UT(\Sigma_{\Mod})$, the geodesic flow moves the point $(x,v)$ in unit speed along the geodesic starting at $x$ tangent to $v$. Each oriented simple geodesic $\gamma$ on $\Sigma_{\Mod}$ has a canonical lift to $\UT(\Sigma_{\Mod})$. The periodic orbits of $\Psi_t$ on $\UT(\Sigma_{\Mod})$ correspond precisely to the canonical lift to $\UT(\Sigma_{\Mod})$ of oriented simple geodesics on $\Sigma_{\Mod}$. 

As noted in the introduction, $\UT(\Sigma_{\Mod})$ is homeomorphic to the complement of the trefoil knot $S^3 \setminus T_{2,3}$. In \cite{GhysICM06}, Ghys showed that periodic orbits of $\Psi_t$ can be isotoped to lie on a branched surface in $S^3 \setminus T_{2,3}$ which is known as the Lorenz template, $\mathcal{T}$, see \cref{fig:modulartemplate}. 

\begin{figure}[h!]
    \centering
    \includegraphics[scale=0.5]{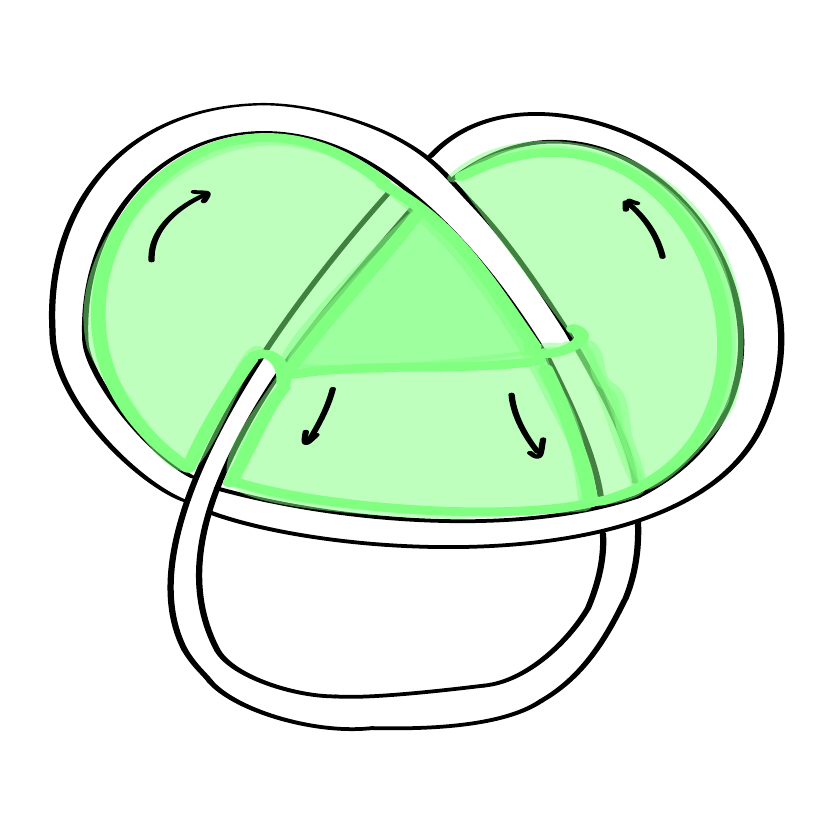}
    \caption{The modular template $\mathcal{T}$ together with a flow}
    \label{fig:modulartemplate}
\end{figure}

The Lorenz template supports a flow which can parametrized as follows. We identify the branching locus of the surface with the open interval $(0,1)$. Starting at any point $x < 1/2$, the flow line follows the left side of the template and returns to the branching locus at the point $2x \mod 1$. If $x > 1/2$, the flow line follows the right side of the template and comes back to the branching locus at the point $2x \mod 1.$ Any periodic orbit of this flow can be determined by a periodic orbit of the times $2$ map on the interval $[0,1]$. Given a sequence of $LR$-word $w_\gamma$, we can obtain the corresponding point in $(0,1)$ by converting $LR$ into a binary sequence by the rule $L\mapsto 0$ and $R \mapsto 1$. Let $\overline{w}_\gamma$ be the decimal number that corresponds to the binary sequence and $|w_\gamma|$ be the length of the $LR$-word. The point in $(0,1)$ that corresponds to $w_\gamma$ is given by
\[
\frac{\overline{w}_\gamma}{2^{|w_\gamma|}-1}
\]
Therefore, given a collection of $LR$-words representing a modular link, we can draw the modular link on the Lorenz template $\mathcal{T}$ by computing the corresponding sequences of periodic orbit on $(0,1)$ and connect them by the flow line on $\mathcal{T}$. For an example of a $3$-component modular link $\{LR,L^2R^2,L^2RLR^2\}$, see \cref{fig:3CompArithmeticModularLink}.


\begin{figure}[h!]
    \centering
    \input{Figures/LR_L2R2_LR2L2R}
    \caption{The 3-component modular link $\{\textcolor{linkcolor1}{LR},\textcolor{linkcolor2}{L^2R^2},\textcolor{linkcolor3}{L^2RLR^2}\}$ and the trefoil knot $\textcolor{linkcolor0}{T_{2,3}}$. The $\textcolor{linkcolor1}{LR}$-component corresponds to the sequence of periodic orbit $\textcolor{linkcolor1}{\{1/3,2/3\}}$ on $(0,1)$. The $\textcolor{linkcolor2}{L^2R^2}$-component corresponds to the sequence of periodic orbit $\textcolor{linkcolor2}{\{1/5,2/5,4/5,3/5\}}$ on $(0,1)$. Finally, the $\textcolor{linkcolor3}{L^2RLR^2}$-component corresponds to the sequence of periodic orbit $\textcolor{linkcolor3}{\{11/63,22/63,44/63,25/63,50/63,37/63\}}$ on $(0,1)$.}
    \label{fig:3CompArithmeticModularLink}
\end{figure}

\subsection{A construction of arithmetic modular links}
\label{subsec:MPPConstruction}
Now we will review the construction of a family of arithmetic modular links $\mathcal{F}\Sigma_{\Mod}$ from \cite{MiguelesPinskyPurcell}. First consider the six-fold cyclic cover of $\Sigma_{\Mod}$ by the once-punctured torus $\Sigma_{1,1}$:
\[
\overline{\pi}:\Sigma_{1,1} \to \Sigma_{\Mod}.
\]
Viewing $\Sigma_{1,1}$ as the quotient $(\mathbb{R}^2 \setminus \mathbb{Z}^2) / \mathbb{Z}^2$, we see that $\Sigma_{1,1}$ can be identified with the square torus with with a point removed, see \cref{fig:OncePuncturedTorus}. A geodesic connecting the cone point of order $2$ and the cusp of $\Sigma_{\Mod}$ lifts to a collection of three cusp-to-cusp geodesics on $\Sigma_{1,1}$. 

\begin{figure}[h!]
    \centering
    \begin{tikzpicture}[scale=2]
        \draw[line width=.5mm] (0,0) -- (-1/2,{sqrt(3)/2}) -- (1/2,{sqrt(3)/2}) -- (1,0) -- (0,0);
        \draw[line width=.5mm] (0,0)  -- (1/2,{sqrt(3)/2});
        
        \draw (0,0) circle[radius=1pt]; and \fill[blue] (0,0) circle[radius=1pt];
        \draw (-1/2,{sqrt(3)/2}) circle[radius=1pt]; and \fill[blue] (-1/2,{sqrt(3)/2}) circle[radius=1pt];
        \draw (1/2,{sqrt(3)/2}) circle[radius=1pt]; and \fill[blue] (1/2,{sqrt(3)/2}) circle[radius=1pt];
        \draw (1,0) circle[radius=1pt]; and \fill[blue] (1,0) circle[radius=1pt];

        \draw[line width=.5mm] (-2,0) -- (-1/2-2,{sqrt(3)/2}) -- (1/2-2,{sqrt(3)/2}) -- (1-2,0) -- (0-2,0);
        \draw[line width=.5mm] (0-2,0)  -- (1/2-2,{sqrt(3)/2});
        
        \draw (0-2,0) circle[radius=1pt]; and \fill[blue] (0-2,0) circle[radius=1pt];
        \draw (-1/2-2,{sqrt(3)/2}) circle[radius=1pt]; and \fill[blue] (-1/2-2,{sqrt(3)/2}) circle[radius=1pt];
        \draw (1/2-2,{sqrt(3)/2}) circle[radius=1pt]; and \fill[blue] (1/2-2,{sqrt(3)/2}) circle[radius=1pt];
        \draw (1-2,0) circle[radius=1pt]; and \fill[blue] (1-2,0) circle[radius=1pt];

        \draw[line width=.5mm] (2,0) -- (-1/2+2,{sqrt(3)/2}) -- (1/2+2,{sqrt(3)/2}) -- (1+2,0) -- (0+2,0);
        \draw[line width=.5mm] (0+2,0)  -- (1/2+2,{sqrt(3)/2});
        
        \draw (0+2,0) circle[radius=1pt]; and \fill[blue] (0+2,0) circle[radius=1pt];
        \draw (-1/2+2,{sqrt(3)/2}) circle[radius=1pt]; and \fill[blue] (-1/2+2,{sqrt(3)/2}) circle[radius=1pt];
        \draw (1/2+2,{sqrt(3)/2}) circle[radius=1pt]; and \fill[blue] (1/2+2,{sqrt(3)/2}) circle[radius=1pt];
        \draw (1+2,0) circle[radius=1pt]; and \fill[blue] (1+2,0) circle[radius=1pt];

        \draw[line width=.5mm,red] (3/4,{sqrt(3)/4}) -- (-1/4,{sqrt(3)/4});
        \draw[line width=.5mm,red] (1/2+2,0) -- (0+2,{sqrt(3)/2});
        
    \end{tikzpicture}
    \caption{The once-punctured torus $\Sigma_{1,1}$ with a punctured removed (blue). The $0/1$ curve is shown in the middle. The $1/0$ curve is shown on the right.}
    \label{fig:OncePuncturedTorus}
\end{figure}
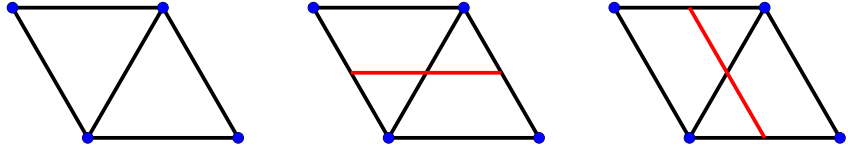

A line in $\mathbb{R}^2$ with slope $p/q$ and disjoint from $\mathbb{Z}^2$ projects to an essential simple closed curve in $\Sigma_{1,1}$. Conversely, an essential simple closed curve in $\Sigma_{1,1}$ lifts to a line in $\mathbb{R}^2$ with slope $p/q$ and disjoint from $\mathbb{Z}^2$. We see that the isotopy classes of essential simple closed curve in $\Sigma_{1,1}$ correspond to $\mathbb{Q}\cup\{1/0\}$. They are organized by the Farey tessalation of $\bbH^2$, see \cref{fig:FareyGraph}. In particular, the ideal vertices of the Farey triangulation coincide with $\mathbb{Q}\cup \{\infty\}$. The edges of the Farey triangulation connecting $p/q$ and $r/s$ if and only if the corresponding simple close curves have geometric intersection number 1. 

\begin{figure}[h!]
    \centering
    \begin{tikzpicture}[scale=2]
    \draw[line width = 1] (0,0) circle (1);
    \draw[line width = 1] (0,1) arc (180:240:{sqrt(3)});
    \draw[line width = 1] ({sqrt(3)/2},-1/2) arc (60:120:{sqrt(3)});
    \draw[line width = 1] (-{sqrt(3)/2},-1/2) arc (-60:0:{sqrt(3)});
    
    \draw[line width = 1] (0,1) arc (180:300:{1/sqrt(3)});
    \draw[line width = 1] (0,1) arc (180:60:{-1/sqrt(3)});

    \draw[line width = 1] ({sqrt(3)/2},-1/2) arc (240:120:{1/sqrt(3)});
    \draw[line width = 1] ({sqrt(3)/2},-1/2) arc (60:180:{1/sqrt(3)});

    \draw[line width = 1] ({-sqrt(3)/2},-1/2) arc (120:0:{1/sqrt(3)});
    \draw[line width = 1] ({-sqrt(3)/2},-1/2) arc (-60:60:{1/sqrt(3)});
    \filldraw[black] ({sqrt(3)/2},-1/2) circle (1pt) node[anchor=west]{$1/1$};
    \filldraw[black] ({sqrt(3)/2},1/2) circle (1pt) node[anchor=west]{$2/1$};
    \filldraw[black] (0,1) circle (1pt) node[above]{$1/0$};
    \filldraw[black] ({-sqrt(3)/2},1/2) circle (1pt) node[left]{$-1/1$};
    \filldraw[black] ({-sqrt(3)/2},-1/2) circle (1pt) node[left]{$0/1$};
    \filldraw[black] (0,-1) circle (1pt) node[below]{$1/2$};
\end{tikzpicture}
    \caption{The Farey graph parametrizing essential simple closed curves on $\Sigma_{1,1}$}
    \label{fig:FareyGraph}
\end{figure}
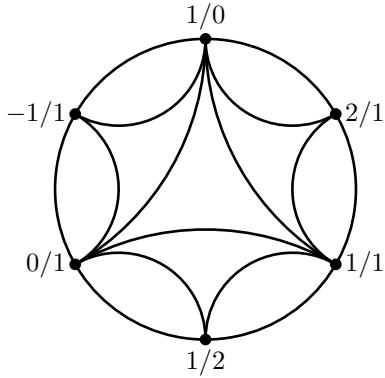

We can parametrize isotopy classes of oriented essential simple closed curve in $\Sigma_{1,1}$ by the following set of vectors: 
\[
\mathcal{U} = \left\{
\begin{pmatrix}
p\\q 
\end{pmatrix}\mid \gcd(p,q) =1, p = \pm 1 \text{ if } q=0, q = \pm 1 \text{ if } p=0 
\right\},
\]
the set of rational direction in $\mathbb{R}^2$. Note that the vector $\begin{pmatrix}
    1 \\ 0
\end{pmatrix}$ corresponds to the positive $y$-direction of $\mathbb{R}^2$ while the vector $\begin{pmatrix}
    0 \\ 1
\end{pmatrix}$ corresponds to the positive $x$-direction of $\mathbb{R}^2$. By abusing notation, we will use elements of $\mathcal{U}$ to denote isotopy classes of oriented essential simple closed curve in $\Sigma_{1,1}$. Similarly, we will use elements of $\mathbb{Q}\cup \{1/0\}$ to the denote the unoriented counterpart in $\Sigma_{1,1}$. 

Since the deck group of $\overline{\pi}:\Sigma_{1,1}\to \Sigma_{\Mod}$ acts by isometries on $\Sigma_{1,1}$, we have an associated $6$-fold cyclic covering $\pi: \UT(\Sigma_{1,1}) \to \UT(\Sigma_{\Mod})$. The unit tangent bundle $\UT(\Sigma_{1,1})$ can be trivialized as a product $\Sigma_{1,1}\times S^1$ where $S^1 = \mathbb{R}/2\pi\mathbb{Z}$. In particular, the oriented curve $\begin{pmatrix}
    p \\ q 
\end{pmatrix}\in\mathcal{U}$ on $\Sigma_{1,1}$ determines a canonical lift to the oriented curve \[\begin{pmatrix}
    p \\ q
\end{pmatrix}\times\left\{\arg \begin{pmatrix}
    p \\ q
\end{pmatrix}\right\}\] 
where $\arg:\mathcal{U} \to [0,2\pi)$ is the angle from $\begin{pmatrix}
    0 \\ 1
\end{pmatrix}$ to $\begin{pmatrix}
    p \\ q
\end{pmatrix}$ in the counter clockwise direction. Since an oriented curve in $\mathcal{U} \subset \Sigma_{1,1}$ completely determines its canonical lift to $\UT(\Sigma_{1,1})$, we also use elements in $\mathcal{U}$ to denote this canonical lift.

By \cite[Lemma 5.1]{MiguelesPinskyPurcell}, the action of a generator $\nu$ of the deck group of $\overline{\pi}:\Sigma_{1,1} \to \Sigma_{\Mod}$ on the oriented curve is by the order $6$ matrix in $\SL_2(\mathbb{Z})$
\[
\nu = \begin{pmatrix}
    0 & 1 \\ -1 & 1
\end{pmatrix}.
\] 
Since an oriented curve in $\mathcal{U}\subset \Sigma_{1,1}$ determines its canonical lift, we can also denote the action the deck group of $\pi:\UT(\Sigma_{1,1})\to \UT(\Sigma_{\Mod})$ on the set of canonical lifts $\mathcal{U} \subset \UT(\Sigma_{1,1})$ is also by the same matrix $ \nu $.

The following lemma from \cite{MiguelesPinskyPurcell} explains the relationship between canonical lifts of oriented closed geodesic in $\Sigma_{1,1}$ in $\mathcal{U}$ and canonical lifts of oriented closed geodesic in $\Sigma_{\Mod}$. 

\begin{lem}[{\cite[Lemma 5.1]{MiguelesPinskyPurcell}}]
    Suppose that $\overline{\gamma}$ is an oriented closed geodesic in $\Sigma_{\Mod}$ obtained by projecting the simple closed curve $p/q \subset \Sigma_{1,1}$ via the covering map $\overline{\pi}:\Sigma_{1,1}\to\Sigma_{\Mod}$. Then the canonical lift $\gamma \subset \UT(\Sigma_{\Mod})$ has six lifts. These lifts are 
    \[
    \left\{\pm \begin{pmatrix}
        p \\ q
    \end{pmatrix},\pm \begin{pmatrix}
        q \\ q-p
    \end{pmatrix},\pm \begin{pmatrix}
        p-q \\ p
    \end{pmatrix}\right\} \subset \UT(\Sigma_{1,1}).
    \]
\end{lem}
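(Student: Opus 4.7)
The plan is a direct orbit–stabilizer computation. The two structural ingredients are already in place by the time the lemma is stated: the covering $\pi: \UT(\Sigma_{1,1}) \to \UT(\Sigma_{\Mod})$ is a regular six-fold cover, and the discussion preceding the lemma identifies the deck transformation generator, acting on the set of canonical lifts $\mathcal{U}$, with the order-six matrix $\nu = \begin{pmatrix} 0 & 1 \\ -1 & 1 \end{pmatrix}$. Since $\gamma$ is connected and $\pi$ is regular of degree six, the preimage $\pi^{-1}(\gamma)$ is the $\langle\nu\rangle$-orbit of any single lift; so once we know one lift is $\begin{pmatrix} p \\ q \end{pmatrix}$, the entire set of lifts is $\bigl\{ \nu^k \begin{pmatrix} p \\ q \end{pmatrix} : k = 0, \dots, 5\bigr\}$.

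First I would compute this orbit explicitly. A direct matrix calculation shows $\nu^3 = -I$ (as expected, since the characteristic polynomial $\lambda^2 - \lambda + 1$ has primitive sixth roots of unity as roots, and these cube to $-1$). Consequently the orbit organizes into three antipodal pairs $\pm \nu^k \begin{pmatrix} p \\ q \end{pmatrix}$ for $k = 0, 1, 2$. Applying $\nu^0,\nu,\nu^2$ yields $\begin{pmatrix} p \\ q \end{pmatrix}$, $\begin{pmatrix} q \\ q-p \end{pmatrix}$, and $\begin{pmatrix} q-p \\ -p \end{pmatrix} = -\begin{pmatrix} p-q \\ p \end{pmatrix}$, which together with their negatives reproduce the six vectors listed in the lemma.

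Next I would verify that these six vectors are pairwise distinct, so that the orbit really has six elements and $\pi^{-1}(\gamma)$ has six components. The stabilizer of $\begin{pmatrix} p \\ q \end{pmatrix}$ in $\langle \nu \rangle$ consists of those $k$ with $\nu^k v = v$. For $k \in \{1,2,4,5\}$, the matrix $\nu^k$ has non-real eigenvalues (primitive sixth or third roots of unity other than $1$), hence no real eigenvectors; for $k = 3$, $\nu^3 = -I \ne I$. Thus the stabilizer is trivial, so the orbit has six distinct vectors.

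Finally, to identify these vectors with the six lifts of $\gamma$, one uses that $\nu$ acts by orientation-preserving isometries of $\Sigma_{1,1}$, so the induced action on $\UT(\Sigma_{1,1})$ sends canonical lifts to canonical lifts. Each $\nu^k \begin{pmatrix} p \\ q \end{pmatrix}$ therefore determines a canonical lift curve, these are six disjoint simple closed curves contained in $\pi^{-1}(\gamma)$, and since the cover has degree six these are all of them. I do not anticipate a real obstacle: the argument is essentially orbit–stabilizer combined with a short matrix computation, with the key structural input (the explicit form of $\nu$) cited from \cite{MiguelesPinskyPurcell}.
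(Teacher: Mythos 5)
The paper does not supply a proof of this lemma: it is stated as a citation of \cite[Lemma~5.1]{MiguelesPinskyPurcell}, so there is no in-paper argument to compare yours against. As a reconstruction, your orbit--stabilizer argument is correct and is the natural one. The matrix computations check out: $\nu^3 = -I$, and applying $\nu^0$, $\nu^1$, $\nu^2$ together with their negatives to the vector with entries $p,q$ reproduces the asserted six-element set. Your stabilizer argument is also sound: $\nu^3 = -I$ fixes no nonzero vector, and $\nu^k$ for $k \in \{1,2,4,5\}$ has no real eigenvalue, so the $\langle\nu\rangle$-orbit of any nonzero lift vector has exactly six elements and the preimage $\pi^{-1}(\gamma)$ has exactly six components. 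You correctly flag the one external ingredient, the explicit form of $\nu$, which the present paper itself also quotes from \cite{MiguelesPinskyPurcell}; granting that, the identification of the six lifts follows from the regularity of the six-fold cover together with the observation that $\nu$, acting by an orientation-preserving isometry of $\Sigma_{1,1}$, carries canonical lifts of oriented geodesics to canonical lifts. I see no gaps.
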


A main result of \cite{MiguelesPinskyPurcell} is the following theorem

\begin{thm}[{\cite[Theorem 4.3, 5.3]{MiguelesPinskyPurcell}}]
\label{thm:MPPMainResult}
     Suppose that $\Delta:= \left\{\begin{pmatrix}
         a_j \\ b_j
     \end{pmatrix} \right\} \subset \UT(\Sigma_{1,1}) $ such that:
     \begin{enumerate}
         \item $|\Delta| < \infty$,
         \item $\Delta$ is invariant under the action of $\displaystyle\nu = \begin{pmatrix}
             0 & 1 \\ -1 & 1
         \end{pmatrix}$, and
         \item For every $\begin{pmatrix}
         a_j \\ b_j
     \end{pmatrix}$, there exists $\begin{pmatrix}
         a_i \\ b_i
     \end{pmatrix}$  and $\begin{pmatrix}
         a_k \\ b_k
     \end{pmatrix}$ such that $\left|\det\begin{pmatrix}
         a_i & a_j \\ b_i & b_j
     \end{pmatrix}\right| = \left|\det\begin{pmatrix}
         a_j & a_k \\ b_j & b_k
     \end{pmatrix}\right|  = 1$.
     \end{enumerate}
     Then the manifolds $\UT(\Sigma_{1,1})\setminus \Delta$ and $\UT(\Sigma_{\Mod}) \setminus \pi(\Delta)$ are both arithmetic. 
\end{thm}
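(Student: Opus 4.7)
The plan is to produce an explicit geometric decomposition showing that $\UT(\Sigma_{1,1})\setminus\Delta$ is commensurable with the Bianchi orbifold $\mathbb{H}^3/\PSL_2(\mathbb{Z}[i])$, and then to use the $\nu$-invariance of $\Delta$ to descend this to arithmeticity of $\UT(\Sigma_{\Mod})\setminus\pi(\Delta)$.

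First I would set up the local picture using the trivialization $\UT(\Sigma_{1,1})\cong\Sigma_{1,1}\times S^1$. Each element $\binom{p}{q}$ of $\Delta$ is the product of an essential simple closed curve of slope $p/q$ in $\Sigma_{1,1}$ with the single point $\arg\binom{p}{q}$ in $S^1$. The Farey-neighbor condition (3) says each slope $p/q$ appearing in $\Delta$ has two adjacent slopes $p'/q'$ (with $|pq' - qp'| = 1$) also appearing in $\Delta$. Since two curves on $\Sigma_{1,1}$ of Farey-adjacent slopes meet in exactly one transverse point, I can cut $\Sigma_{1,1}$ along the underlying curves of $\Delta$ into disks whose vertices are either the puncture of the once-punctured torus or the single intersection points of those curves.

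Second, I would assemble these disks, taken in product with $S^1$ and punctured at the $\arg\binom{p}{q}$ values, into ideal polyhedra in $\mathbb{H}^3$. The claim at the heart of the argument is that the assembly yields a decomposition into regular ideal octahedra, with the fixed angular coordinates $\arg\binom{p}{q}$ producing exactly the right-angled dihedral structure needed for a complete hyperbolic gluing. Because any manifold built from finitely many regular ideal octahedra via face identifications is commensurable with the orbifold $\mathbb{H}^3/\PSL_2(\mathbb{Z}[i])$ (the symmetry group of the ideal-octahedral tiling of $\mathbb{H}^3$ is commensurable with $\PSL_2(\mathbb{Z}[i])$), it follows that $\UT(\Sigma_{1,1})\setminus\Delta$ is arithmetic.

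Finally, hypothesis (2) ensures that $\nu$ acts freely on $\UT(\Sigma_{1,1})\setminus\Delta$ with quotient $\UT(\Sigma_{\Mod})\setminus\pi(\Delta)$, so the two manifolds are commensurable and arithmeticity passes to the quotient. The main obstacle is the octahedral decomposition itself: one must verify that the angular coordinates $\arg\binom{p}{q}$ for Farey-adjacent slopes differ by the precise amount needed for the face pairings of the regular ideal octahedra to close up into a complete hyperbolic structure. This is exactly where the signed determinant-$1$ condition (rather than mere coprimality) does its work, translating the combinatorics of the Farey tessellation into the rigid hyperbolic geometry of $\mathbb{H}^3$.
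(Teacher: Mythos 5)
A point of context first: the paper does not prove this statement at all --- it is imported verbatim from Migueles--Pinsky--Purcell \cite{MiguelesPinskyPurcell} (their Theorems 4.3 and 5.3), so there is no internal proof to compare against, and your attempt has to be measured against the cited argument. At the level of strategy you are aligned with it: decompose $\UT(\Sigma_{1,1})\setminus\Delta$ into regular ideal octahedra, use the standard fact that a finite-volume hyperbolic manifold built from regular ideal octahedra is commensurable with $\mathbb{H}^3/\PSL_2(\mathbb{Z}[i])$ and hence arithmetic, and then descend to $\UT(\Sigma_{\Mod})\setminus\pi(\Delta)$ via the $6$-fold covering $\pi$ (here hypothesis (2) is what guarantees $\pi^{-1}(\pi(\Delta))=\Delta$, so that the restriction of $\pi$ to the complements is a covering; freeness of the deck action is automatic and has nothing to do with (2)). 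Those two bookend steps are fine.

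The genuine gap is the step you yourself call ``the main obstacle'': the octahedral decomposition is asserted rather than constructed, and the construction you sketch is not set up correctly. Cutting the fiber $\Sigma_{1,1}$ along all the underlying curves of $\Delta$ at once is not the relevant move --- the components of $\Delta$ sit at distinct angular coordinates $\arg(p/q)$ in the $S^1$ factor, so in $\UT(\Sigma_{1,1})\cong\Sigma_{1,1}\times S^1$ they are pairwise disjoint and their intersection pattern in the fiber does not directly produce a polyhedral decomposition of the $3$-manifold. The decomposition that actually works cuts along the punctured fiber surfaces $\Sigma_{1,1}\times\{\arg\gamma_i\}\setminus\gamma_i$ (each a thrice-punctured sphere), splitting the complement into product pieces $N_{\gamma_i,\gamma_{i+1}}$ between angularly consecutive drilled curves; hypothesis (3) is exactly what forces consecutive curves to be Farey neighbors, so each piece is homeomorphic to $N_{0,1}$ as in \cref{lem:Nalphabeta}, and $N_{0,1}$ carries the structure of a single regular ideal octahedron cut along a totally geodesic thrice-punctured sphere (it is the Whitehead link complement cut open). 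One must then check that the regluings along these thrice-punctured spheres are realized by isometries (every mapping class of a thrice-punctured sphere contains an isometry of its complete hyperbolic structure), so the glued-up complete structure is genuinely assembled from regular ideal octahedra, and invoke Mostow rigidity to identify it with the hyperbolic structure on $\UT(\Sigma_{1,1})\setminus\Delta$. By contrast, your claim that ``the fixed angular coordinates $\arg(p/q)$ produce exactly the right-angled dihedral structure'' is not an argument: the Euclidean angles in the product trivialization have no a priori relation to dihedral angles of a hyperbolic polyhedral decomposition, and the determinant-$1$ condition enters as a combinatorial Farey-adjacency/orientation condition on consecutive levels, not as a direct source of right angles. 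Without carrying out this central construction, the proposal does not establish arithmeticity.
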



Let us denote by $\mathcal{F}\Sigma_{1,1}$ the collection of $\Delta$ where $\Delta$ is the union of canonical lifts of oriented closed geodesics in $\Sigma_{1,1}$ to $\UT(\Sigma_{1,1})$ satisfying the conditions of \cref{thm:MPPMainResult}. The collection of arithmetic modular links that was found in \cite{MiguelesPinskyPurcell} is described as 
\[
\mathcal{F}\Sigma_{\Mod} := \{\Gamma \subset \UT(\Sigma_{\Mod}) \mid  \pi^{-1}(\Gamma) \in \mathcal{F}\Sigma_{1,1}\}
\]

We end with the following observation from \cite{MiguelesPinskyPurcell} underpinning their construction:

\begin{lem}[{\cite[Lemma 4.1]{MiguelesPinskyPurcell}}]
\label{lem:Nalphabeta}
    Let $N_{\alpha,\beta}$ be the manifold 
    \[
    N_{\alpha,\beta} := (\Sigma_{1,1} \times [0,1]) \setminus \{\alpha \times \{0\} \cup \beta \times \{1\}\}
    \]
    where $\alpha$ and $\beta$ are $p/q$ and $r/s$ curves on $\Sigma_{1,1}$ such that $|ps - qr| =1$. Then $N_{\alpha,\beta}$ is homeomorphic to $N_{0,1}$.
\end{lem}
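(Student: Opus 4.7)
The plan is to realize the desired homeomorphism as a product $\phi \times \mathrm{id}_{[0,1]}$, where $\phi$ is a self-homeomorphism of $\Sigma_{1,1}$ chosen via the mapping class group. The hypothesis $|ps-qr|=1$ says exactly that the vectors $(p,q)^T$ and $(r,s)^T$ form a $\mathbb{Z}$-basis of $\mathbb{Z}^2$, equivalently that $\alpha$ and $\beta$ are joined by an edge of the Farey graph and thus have geometric intersection number $1$ on $\Sigma_{1,1}$.

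The classical identification $\mathrm{Mod}(\Sigma_{1,1}) \cong \GL_2(\mathbb{Z})$ (with $\SL_2(\mathbb{Z})$ the orientation-preserving part) acts on isotopy classes of essential simple closed curves through the matrix action on slope vectors in $\mathcal{U}$, and this action is transitive on ordered pairs of adjacent Farey vertices: one simply applies the inverse of the change-of-basis matrix $\begin{pmatrix} p & r \\ q & s\end{pmatrix}$, which lies in $\GL_2(\mathbb{Z})$ by hypothesis. This gives a homeomorphism $\phi:\Sigma_{1,1}\to\Sigma_{1,1}$ such that $\phi(\alpha)$ is isotopic to the $0/1$ curve and $\phi(\beta)$ is isotopic to the $1/0$ curve. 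The product $\phi\times\mathrm{id}_{[0,1]}$ is a self-homeomorphism of $\Sigma_{1,1}\times[0,1]$, and after precomposing with two ambient isotopies of $\Sigma_{1,1}$ supported in small collars of the boundary levels, we may arrange that the image of $\alpha\times\{0\}$ coincides exactly with the standard $0/1$ curve in $\Sigma_{1,1}\times\{0\}$ and the image of $\beta\times\{1\}$ coincides exactly with the standard $1/0$ curve in $\Sigma_{1,1}\times\{1\}$. Restricting to the complement yields the desired homeomorphism $N_{\alpha,\beta}\cong N_{0,1}$.

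The main obstacle is really the bookkeeping in invoking $\mathrm{Mod}(\Sigma_{1,1})\cong\GL_2(\mathbb{Z})$ and checking that its action on slopes agrees with the parametrization by $\mathcal{U}$ introduced earlier; once this is in place the proof is essentially a change of basis. A minor orientation subtlety is that when $ps-qr=-1$ the matrix $\phi$ has determinant $-1$ and so reverses orientation—this is harmless because $\alpha, \beta$ are unoriented in the statement and $\Sigma_{1,1}$ admits orientation-reversing self-homeomorphisms (e.g.\ reflection across a diagonal of the fundamental square in \cref{fig:OncePuncturedTorus}).
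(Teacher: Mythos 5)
Your approach matches what the paper intends: the lemma is cited from \cite{MiguelesPinskyPurcell} rather than proved here, but \cref{rem:PropertyOfHomeoBetweenPieces} states that the homeomorphism is induced by a linear map sending $\alpha$ to $0$ and $\beta$ to $1$, which is exactly your $\phi\times\mathrm{id}$ construction via $\mathrm{Mod}(\Sigma_{1,1})\cong\mathrm{GL}_2(\mathbb{Z})$. One small fix: in the paper's convention the subscripts of $N$ are rational slopes in $\mathbb{Q}\cup\{1/0\}$, so $N_{0,1}$ drills the $0/1$ curve at level $0$ and the $1/1$ curve (not the $1/0$ curve) at level $1$; the linear map you want is therefore $\begin{pmatrix}0 & 1 \\ 1 & 1\end{pmatrix}\begin{pmatrix}p & r \\ q & s\end{pmatrix}^{-1}$ rather than the bare inverse change-of-basis matrix, but this is a harmless relabeling and the method is otherwise sound.
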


\begin{remark}
\label{rem:PropertyOfHomeoBetweenPieces}
The homeomorphism between $N_{\alpha,\beta}$ and $N_{0,1}$ is induced by the linear map that sends $\alpha$ to $0$ and $\beta$ to $1$. If we orient all the curves $\alpha,\beta,0$ and $1$, then there exists a unique linear transformation that preserves the orientations of the curves and induces the homeomorphism between $N_{\alpha,\beta}$, $N_{0,1}$.
\end{remark}

\section{Proof of the main results}

\subsection{Proof of \cref{thm:ComplementArithmeticModularLinks}}
In this section, we give a proof of \cref{thm:ComplementArithmeticModularLinks}. We begin with the following observation.

\begin{lem}
\label{lem:Delta0}
    For any $\Delta \in \mathcal{F}\Sigma_{1,1}$, $\Delta $ contains 
    \[
    \Delta_0 := \left\{\pm\begin{pmatrix}
        0 \\ 1
    \end{pmatrix},\pm\begin{pmatrix}
        1 \\ 1
    \end{pmatrix},\pm\begin{pmatrix}
        1 \\ 0
    \end{pmatrix}\right\}.
    \]
    Consequently, $\Delta_0$ is the smallest collection in $\mathcal{F}\Sigma_{1,1}$ ordered by inclusion.
\end{lem}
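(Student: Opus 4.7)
The strategy is to prove that $\Delta \cap \Delta_0 \neq \emptyset$ and then upgrade to the full inclusion using $\nu$-invariance. A direct iteration of $\nu$ starting at $(0,1)$ traces out the cycle $(0,1) \to (1,1) \to (1,0) \to (0,-1) \to (-1,-1) \to (-1,0) \to (0,1)$, which exhausts $\Delta_0$, so $\Delta_0$ is a single $\nu$-orbit. Since Condition (2) of \cref{thm:MPPMainResult} makes $\Delta$ a $\nu$-invariant set, producing one element of $\Delta_0$ inside $\Delta$ automatically yields $\Delta_0 \subseteq \Delta$.

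To produce such an element, I would argue by contradiction and minimality. Assume $\Delta \cap \Delta_0 = \emptyset$ and pick $v = (a,b) \in \Delta$ minimizing $\max(|a|, |b|)$ among elements of $\Delta$. Condition (3) furnishes $v_i, v_k \in \Delta$ with $\det(v_i, v) = \det(v, v_k) = 1$. The set of solutions $w$ to $\det(w, v) = 1$ is an affine line $\{w_0 + mv : m \in \mathbb{Z}\}$, and its smallest-norm representatives are the Stern--Brocot parents of $v$, which have strictly smaller coordinates than $v$ itself. The plan is to use the minimality of $v$ and the $\nu$-invariance of $\Delta$, together with the forcing of Farey neighbors built into Condition (3), to force $v$ into $\Delta_0$.

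The main obstacle is that Condition (3) does not in general select the Stern--Brocot parents---any Farey neighbor of $v$ will do---so a naive descent does not immediately work. Worse, minimality in $\max(|a|, |b|)$ cannot by itself rule out the $\nu$-orbit $\{\pm(1,-1), \pm(1,2), \pm(2,1)\}$, whose small-norm members lie entirely outside $\Delta_0$. The decisive input is the rigidity of the $\nu$-action on the boundary circle: acting on $\mathbb{RP}^1$ as the order-three Möbius transformation $s \mapsto 1/(1-s)$, $\nu$ has no real fixed points, and its unique invariant Farey triangle on the boundary is the base triangle $\{0, 1, \infty\}$---exactly the slopes represented by $\Delta_0$. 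Combining this rigidity with the forcing obtained by iterating Condition (3), the finiteness of $\Delta$ (Condition (1)), and $\nu$-invariance produces the required element of $\Delta_0$ inside $\Delta$, completing the proof.
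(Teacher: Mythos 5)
Your reduction to showing $\Delta\cap\Delta_0\neq\emptyset$ via $\nu$-invariance is correct: one checks that $\nu$ cycles $(0,1)^T\to(1,1)^T\to(1,0)^T\to(0,-1)^T\to(-1,-1)^T\to(-1,0)^T\to(0,1)^T$, so $\Delta_0$ is indeed a single $\nu$-orbit. The observation that $\nu$ acts on $\mathbb{RP}^1$ as $s\mapsto 1/(1-s)$ with no fixed points, and that $\{0,1,\infty\}$ is its unique invariant Farey triangle (equivalently, $\Delta_0$ is the unique $\nu$-orbit of pairwise Farey-adjacent directions, since $p^2-pq+q^2=1$ has only the six solutions in $\Delta_0$), is also correct and relevant.

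However, the argument is not completed. You candidly identify the two obstructions to the descent on $\max(|a|,|b|)$ — Condition~(3) does not single out Stern--Brocot parents, and the orbit $\{\pm(1,-1),\pm(1,2),\pm(2,1)\}$ contains a vector of $\max$-norm $1$ — and then conclude with ``Combining this rigidity with the forcing obtained by iterating Condition~(3), the finiteness of $\Delta$, and $\nu$-invariance produces the required element of $\Delta_0$.'' That sentence names hypotheses but does not carry out the combination: it is a promissory note, not a proof. Nothing you have written actually shows how Farey adjacency, finiteness, and the absence of $\nu$-fixed points interact to force a member of $\Delta_0$ into $\Delta$, and I do not see how a minimality argument in the $\max$-norm can be salvaged, precisely because the problematic orbit you exhibit shows that $\max$-norm minimality is the wrong extremal quantity.

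The paper's proof uses a different and decisive extremal quantity: it passes to the \emph{unoriented} slopes $\overline{\Delta}\subset\mathbb{Q}\cup\{\infty\}$ and uses that $\nu$ (acting as $V$, of order $3$ on slopes) cyclically permutes the three boundary arcs cut out by $\{0/1,1/1,1/0\}$, so that $\overline{\Delta}$ has equally many points in each arc. The slope in $\overline{\Delta}$ lying in $(0/1,1/1)$ that is closest to $0/1$ needs a Farey neighbor on its $0/1$-side by Condition~(3); but all such neighbors of a slope in $(0,1)$ lie in $[0,1]$, and the only one on the $0/1$-side not excluded by the minimality of that slope is $0/1$ itself. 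This forces $0/1\in\overline{\Delta}$ and, by $V$-invariance, $\{0/1,1/1,1/0\}\subseteq\overline{\Delta}$. The containment of Farey neighbors within the interval $[0,1]$ is the key structural fact that makes the arc-extremality argument close, and it is exactly what a $\max$-norm descent cannot see. You would need to replace your minimality-in-norm step with minimality within a $\nu$-arc (or give some other mechanism) to turn your sketch into a proof.
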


\begin{proof}
We project $\Delta$ to $\Sigma_{1,1}$ to get a collection of essential simple closed curves $\overline{\Delta} \subset \Sigma_{1,1}$. The fact that $\Delta $ is $\nu$-invariant implies that $\overline{\Delta}$ is $V$-invariant where we view $\overline{\Delta} \subset \mathbb{Q}\cup\{\infty\}$. Since $\overline{\Delta}$ is $V$-invariant, $|\overline{\Delta}| = 3n$ for some $n\geq 1$. Furthermore, there are exactly $n$ curves represented by vertices of the Farey graph in the intervals from $0/1$ to $1/1$, from $1/1$ to $1/0$ and from $1/0$ to $0/1$ all oriented counter clockwise. The third condition for $\Delta$ is satisfied only if $\{0/1,1/1,1/0\} \subseteq \overline{\Delta}$. Lifting these curves to $\UT(\Sigma_{1,1})$, we get the desired conclusion for $\Delta$.
\end{proof}

 Let $\Gamma_{0} = \left\{\pi\begin{pmatrix}
        0 \\1 
    \end{pmatrix}\right\}$. Up to a reparametrization, the manifold $M_{\Gamma_{0}}$ is  
    \[
    M_{\Gamma_0} = \frac{\Sigma_{1,1}\times [0,1] \setminus\{0/1\times \{0\} \cup 1/1 \times\{1\}\}}{(x,0)\sim(\nu(x),1)}.
    \]
    Let $\phi:M_{\Gamma_0} \to S^1$ be a surjection coming from projecting onto the second factor which induces a surjective homomorphism $\phi_*:\pi_1(M_{\Gamma_0})\to\mathbb{Z}$. The map $\phi_*$ sends the meridian of the trefoil to $1$ (up to taking inverse) and the meridian of the $0/1$ geodesic to $0$. Let $M_n$ be the cover of $M_{\Gamma_0}$ that corresponds to $\phi_*^{-1}(n\mathbb{Z})$ for some positive integer $n$, then up to a reparametrization of $S^1$ the manifold $M_n$ is 
    \[
     M_n = \frac{\Sigma_{1,1}\times [0,n] \setminus\{\nu^i(0/1)\times \{i\}\}_{i=0}^n}{(x,0)\sim(\nu^n(x),n)}.
    \]
See \cref{fig:ModularLinkExample} for an example of $M_2$. 

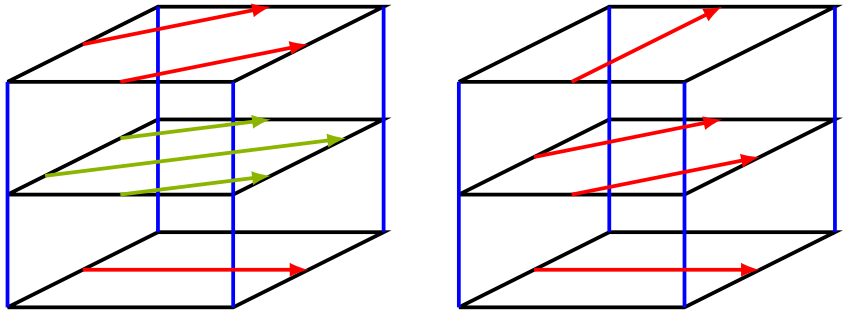
\begin{figure}[h!]
    \centering
    \begin{tikzpicture}[scale=2]
\draw[line width=.5mm] (-3,0) -- (3/2-3,0) -- (5/2-3,1/2) -- (1-3,1/2) -- (0-3,0);
\draw[line width=.5mm] (0-3,3/2) -- (3/2-3,3/2) -- (5/2-3,2) -- (1-3,2) -- (0-3,3/2);
\draw[line width=.5mm] (0-3,3/4) -- (3/2-3,3/4) -- (5/2-3,5/4) -- (1-3,5/4) -- (0-3,3/4);

\draw[line width=.5mm,blue] (0-3,0) -- (0-3,3/2);
\draw[line width=.5mm,blue] (3/2-3,0) -- (3/2-3,3/2);
\draw[line width=.5mm,blue] (5/2-3,1/2) -- (5/2-3,2);
\draw[line width=.5mm,blue] (1-3,1/2) -- (1-3,2);

\draw[line width=.5mm,red,->] (1/2-3,1/4) -- (2-3,1/4); 
\draw[line width=.5mm,red,->] (3/4-3,3/2) -- (2-3,7/4);
\draw[line width=.5mm,red,->] (1/2-3,7/4) -- (7/4-3,2);

\node[above,red] at (-2.5,1/4) {$0/1$}; 
\node[below,red] at (-2.5,2.1) {$1/1$}; 
\node[below,green] at (-2.5,1.3) {$1/2$}; 

\draw[line width=.5mm,green,->] (3/4-3,9/8) -- (7/4-3,5/4);
\draw[line width=.5mm,green,->] (1/4-3,7/8) -- (9/4-3,9/8);
\draw[line width=.5mm,green,->] (3/4-3,3/4) -- (7/4-3,7/8);
\draw[line width=.5mm] (0,0) -- (3/2,0) -- (5/2,1/2) -- (1,1/2) -- (0,0);
\draw[line width=.5mm] (0,3/2) -- (3/2,3/2) -- (5/2,2) -- (1,2) -- (0,3/2);
\draw[line width=.5mm] (0,3/4) -- (3/2,3/4) -- (5/2,5/4) -- (1,5/4) -- (0,3/4);

\draw[line width=.5mm,blue] (0,0) -- (0,3/2);
\draw[line width=.5mm,blue] (3/2,0) -- (3/2,3/2);
\draw[line width=.5mm,blue] (5/2,1/2) -- (5/2,2);
\draw[line width=.5mm,blue] (1,1/2) -- (1,2);

\draw[line width=.5mm,red,->] (1/2,1/4) -- (2,1/4);
\draw[line width=.5mm,red,->] (3/4,3/2) -- (7/4,2);

\node[above,red] at (0.5,1/4) {$0/1$}; 
\node[below,red] at (0.5,2.1) {$1/0$}; 
\node[below,red] at (0.5,1.3) {$1/1$};

\draw[line width=.5mm,red,->] (3/4,3/4) -- (2,1);
\draw[line width=.5mm,red,->] (1/2,1) -- (7/4,5/4);
\end{tikzpicture}
    \caption{On the left is the manifold $M_\Gamma$ where $\Gamma = \{\pi\begin{pmatrix}
        0 \\ 1
    \end{pmatrix},\pi\begin{pmatrix}
        1 \\ 2
    \end{pmatrix}\}$ realized as a once-punctured torus bundle with some curves in red and green drilled out. The gluing map on the left is given by the matrix $\displaystyle \nu = \begin{pmatrix}
        0 & 1 \\ -1 & 1
    \end{pmatrix}$ which glues the bottom to the top. On the right is the manifold $M_2$, the cover of $M_{\Gamma_0}$ corresponds to $\phi_*^{-1}(2\mathbb{Z})$. The gluing map on the right is given by the matrix $\displaystyle \nu^2 = \begin{pmatrix}
        -1 & 1 \\ -1 & 0
    \end{pmatrix}$ which glues the bottom to the top.}
    \label{fig:ModularLinkExample} 
\end{figure}

    
\begin{lem}
\label{lem:HomeoToAugChainLink}
    The manifold $M_n$ is homeomorphic to the complement of the $n$-component augmented chainlink $S^3 \setminus C_n$.
\end{lem}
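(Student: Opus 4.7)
The plan is to exploit the fact that $M_n$ is by construction an $n$-fold cyclic cover of $M_{\Gamma_0}$ and to match this cover with the analogous covering structure defining $C_n$. First I would observe that $M_n$, as the cover of $M_{\Gamma_0}$ corresponding to $\phi_*^{-1}(n\mathbb{Z})$, is an $n$-fold cyclic cover with deck group $\mathbb{Z}/n\mathbb{Z}$. Under $\phi_*$, the meridian of the trefoil cusp of $M_{\Gamma_0}$ maps to $1 \in \mathbb{Z}$ while the meridian of the modular knot cusp maps to $0$. Reducing mod $n$, the cover $M_n$ is classified by the homomorphism $\rho\colon \pi_1(M_{\Gamma_0}) \to \mathbb{Z}/n\mathbb{Z}$ sending the trefoil meridian to a generator and the modular knot meridian to $0$. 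Consequently $M_n$ has $n+1$ cusps: a single lift of the trefoil cusp (whose meridian has order $n$ in $\mathbb{Z}/n\mathbb{Z}$) and $n$ lifts of the modular knot cusp (whose meridian has order $1$).

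Next I would invoke the result from \cite{MiguelesPinskyPurcell} that $M_{\Gamma_0}$ is homeomorphic to the Whitehead link complement $S^3 \setminus C_1$, with the trefoil cusp and the modular knot cusp identified with the two components of $C_1$. By the definition given in the introduction (see \cref{fig:WHL-AugmentedChainLink}), $C_n$ is the full preimage of $C_1$ under the $n$-fold cyclic cover of $S^3$ branched over the blue augmenting component of $C_1$. The standard correspondence between cyclic branched covers of $S^3$ and cyclic unbranched covers of link complements then identifies $S^3 \setminus C_n$ as the unbranched $n$-fold cyclic cover of $S^3 \setminus C_1$ classified by the homomorphism $\pi_1(S^3 \setminus C_1) \to \mathbb{Z}/n\mathbb{Z}$ that sends the meridian of the augmenting component to a generator and the meridian of the other component to $0$. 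Under the MPP homeomorphism, this is precisely the homomorphism $\rho$ from the previous step, so $M_n$ and $S^3 \setminus C_n$ correspond to the same subgroup of $\pi_1(M_{\Gamma_0}) \cong \pi_1(S^3 \setminus C_1)$ and are therefore homeomorphic.

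The main obstacle is to justify that under the MPP homeomorphism $M_{\Gamma_0} \cong S^3 \setminus C_1$, the trefoil cusp of $M_{\Gamma_0}$ corresponds to the augmenting component of $C_1$, namely the component over which we branch to produce $C_n$. Since both components of the Whitehead link are unknots and $C_1$ admits a self-homeomorphism of $S^3$ swapping them, either choice of branching component yields the same link $C_n$ up to isotopy, so the identification is unambiguous; alternatively, one can trace through the cusp identifications in the MPP construction to verify the correspondence directly. Once this matching is settled, the remaining argument is formal covering space theory and delivers the homeomorphism $M_n \cong S^3 \setminus C_n$.
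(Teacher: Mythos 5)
Your proposal is correct and matches the paper's proof in all essentials: both arguments identify $M_n$ as the $n$-fold cyclic cover of $M_{\Gamma_0}$ determined by the slope data of $\phi_*$, transport this cover through the MPP homeomorphism $M_{\Gamma_0}\cong S^3\setminus C_1$, recognize the resulting cover as $S^3\setminus C_n$, and appeal to the symmetry of the Whitehead link to resolve which component is the branching locus. Your write-up is slightly more explicit in phrasing the covering-space step in terms of matching subgroups of $\pi_1$ (and in distinguishing the unbranched cover of the complement from the branched cover of $S^3$), but this is the same argument.
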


\begin{proof}
The manifold $M_{\Gamma_0}$ is homeomorphic to the complement of the Whitehead link by a homeomorphism $h:M_{\Gamma_0} \to S^3 \setminus C_1$ decribed in \cite[Figure 8]{MiguelesPinskyPurcell}. Following \cite[Figure 8]{MiguelesPinskyPurcell}, the homeomorphism $h$ is obtained by performing a Rolfsen twist about the component $\Gamma_0$. The homeomorphism $h$ takes the meridian of the trefoil component in the link $\Gamma_0 \cup \{T_{2,3}\}$ to the meridian of a component of the Whitehead link. Therefore, the homeomorphism $h$ lifts to a homeomorphism between $M_n$ and a $n$-fold cyclic cover branched over $h(N(T_{2,3}))$ where $N(T_{2,3})$ is a neighborhood of the trefoil knot. Since the two components of the Whitehead link are symmetric, the latter manifold is $S^3\setminus C_n$.    
\end{proof}


\begin{remark}
    The manifold $M_n$ can be thought of as the complement of a link in the $n$ -fold cyclic covering of the trefoil complement. In general, cyclic coverings of the trefoil complement do not embed into $S^3$. It is surprising that after drilling out some link components they do always embed in $S^3$.
\end{remark}
    

\begin{proof}[Proof of \cref{thm:ComplementArithmeticModularLinks}]
    Let $\Gamma\subset \UT(\Sigma_{\Mod})$ be any modular link in $\mathcal{F}\Sigma_{\Mod}$, $n = |\Gamma|$ and $M_\Gamma = \UT(\Sigma_{\Mod})\setminus \Gamma$. Given \cref{lem:HomeoToAugChainLink}, our goal is to show that $M_\Gamma$ and $M_n$ are homeomorphic. We lift $\Gamma$ to obtain a collection $\Delta \subset \UT(\Sigma_{1,1})$ that is $\nu$-invariant. By \cref{lem:Delta0}, $\Delta$ contains $\begin{pmatrix}
        0 \\ 1
    \end{pmatrix}$ and  $\begin{pmatrix}
        1 \\ 1
    \end{pmatrix}$.
    Up to a reparametrization of $S^1$,  $M_\Gamma$ is  
    \[
    M_\Gamma = \frac{(\Sigma_{1,1} \times [0,n]) \setminus \{\gamma_i \times \{i\}  \}_{i=0}^n}{(x,0)\sim(\nu(x),n)}
    \]
    for $0\leq i \leq n$ where $\gamma_0$ and $\gamma_n$ are $0/1$ and $1/1$ curve on $\Sigma_{1,1}$. Cutting both manifolds $M_\Gamma$ and $M_n$ along thrice-punctured sphere $\Sigma_{1,1} \times \{0\} \setminus \{0/1 \times \{0\}\}$, we get 
    \begin{equation*}
        P_\Gamma = \Sigma_{1,1}\times[0,n] \setminus  \{\gamma_i \times \{i\}  \}_{i=0}^n \text{ and } P_n = \Sigma_{1,1}\times[0,n] \setminus  \{\nu^i(0/1) \times \{i\}  \}_{i=0}^n.
    \end{equation*}
    By \cref{lem:Nalphabeta}, for each $0\leq i \leq n-1$ we have a homeomorphism
    \[
    h_i : N_{\gamma_i,\gamma_{i+1}} \to N_{\nu^i(0),\nu^{i+1}(0)}.
    \]
    By \cref{rem:PropertyOfHomeoBetweenPieces}, we can choose $h_i$ so that they are induced by linear maps that preserve the orientations of the removed curves. Note that $\Sigma_{1,1} \times \{i+1\}\setminus \{\gamma_{i+1}\times \{i+1\}\}$ is homeomorphic to a thrice-punctured sphere. Our choice of $h_i$ ensures that the composition $h_{i-1}^{-1}\circ h_i$ on $\Sigma_{1,1} \times \{i+1\}\setminus \{\gamma_{i+1}\times \{i+1\}\}$ is a homeomorphism of the thrice-punctured sphere that preserves the punctures. Up to isotopy, we can glue the homeomorphism $h_i$'s together and get a homeomorphism $h: P_\Gamma \to P_n$. Note that $h$ is the identity on $\Sigma_{1,1} \times \{0\}\setminus \{0/1 \times \{0\}\}$. Gluing the bottom of $P_\Gamma$ to the top, we get a homeomorphism
    \[
    h: M_\Gamma \to \frac{\Sigma_{1,1}\times[0,n] \setminus  \{\nu^i(0/1) \times \{i\}  \}_{i=0}^n}{(x,0)\sim((h_{n-1}\circ\nu)(x),n)}. 
    \]
    The manifolds $M_n$ and $h(M_\Gamma)$ are obtained from $P_n$ by gluing the bottom to the top via the two maps $\nu^n$ and $h_{n-1}\circ \nu$ respectively. The two gluing maps differ on $\Sigma_{1,1} \times \{0\} \setminus \{0/1 \times \{0\}\}$ by $\nu^{-n}\circ h_{n-1}\circ\nu$. We will show that this homeomorphism is isotopic to the identity. Since $\Sigma_{1,1} \times \{0\} \setminus \{0/1 \times \{0\}\}$ is a thrice-punctured sphere, it suffices to show that the map $\nu^{-n}\circ h_{n-1}\circ\nu$ preserves the punctures of $\Sigma_{1,1} \times \{0\} \setminus \{0/1 \times \{0\}\}$. These punctures comprises of the puncture of $\Sigma_{1,1} \times \{0\}$ and the two sides of the removed geodesic $\{0/1 \times \{0\}\}$. The homeomophism $\nu^{-n}\circ h_{n-1}\circ\nu$ preserves the puncture coming from $\Sigma_{1,1}\times \{0\}$. The homeomorphism $\nu$ up to isotopy is an orientation preserving linear map on $\Sigma_{1,1}$. In particular, $\nu$ preserves the orientation of any oriented simple closed curve. By \cref{rem:PropertyOfHomeoBetweenPieces}, the homeomorphisms $h_i$ can be chosen to preserves the removed geodesic as an oriented curve on $\Sigma_{1,1}$.  Therefore, the map $\nu^{-n}\circ h_{n-1}\circ\nu$ is isotopic to the identity. Therefore, $h(M_\Gamma)$ is homeomorphic to $M_n$.   
\end{proof}


\subsection{Proof of \cref{cor:MostArithmeticModularLinksAreLarge}}

The claim about containing a closed embedded essential surface follows from the work of Cooper and Long \cite{CooperLong93}. For completeness, we give a brief summary of their article focusing on the pertinent details. In this article, the authors studied pure braids from the representation-theoretic and the geometric perspective. On the representation-theoretic side, they introduced and studied the derivative variety associated to an element of the pure braid group \cite[Section 2,3]{CooperLong93}.   

On the geometric side, they studied the complement $S^3 \setminus \hat{\sigma}$ of the closure of a braid $\sigma \in B_n$ \cite[Section 4]{CooperLong93}. In particular, they showed that $S^3 \setminus \hat{\sigma}$ contains a closed essential surface where $\sigma$ is a pure 4-braid lying in the kernel of the Grassner representation \cite[Theorem 4.8]{CooperLong93}. To establish this result, they give general criteria for a link complement in $S^3$ to contain a closed essential surface \cite[Theorem 4.1 and Corollary 4.6]{CooperLong93}. In Theorem 4.1, they showed that if the $\SL_2(\mathbb{C})$-representation variety of an $n$-component link $L \subset S^3$ contains a component of dimension $> n+3$ and has an irreducible representation, then $S^3 \setminus L$ contains a closed essential surface. They pointed out a sufficient condition for the hypothesis of Theorem 4.1 is that the link group $\pi_1(S^3\setminus L)$ surjects a non-abelian free group of rank $k$ such that $3k > n+3$. This is the essential point of \cite[Corollary 4.4]{CooperLong93}. The surjection $\pi_1(S^3\setminus L) \to F_k$ where $F_k$ is the free group of rank $k$ allows one to embed the representation variety of $F_k$ into that of $\pi_1(S^3 \setminus L)$ by pullbacks. The representation variety of the nonabelian free group of rank $k$ contains a component with an irreducible representation and has dimension $3k > n+3$. It follows that the hypothesis of theorem 4.1 is satisfied if $\pi_1(S^3\setminus L)$ surjects a nonabelian free group of rank $k$ such that $3k > n+3$. 

\begin{remark}
    Note that though Corollary 4.4 of \cite{CooperLong93} is stated as removing one component of the link. To apply their argument, one just need the fact that the fundamental group of the link complement surjects a nonabelian free group of sufficiently large rank.  
\end{remark}

\begin{proof}[Proof of \cref{cor:MostArithmeticModularLinksAreLarge}]
        \cref{thm:ComplementArithmeticModularLinks} shows that $M_\Gamma$ is a $|\Gamma|$-fold cyclic cover of $M_{\Gamma_0}$. Since the complement of the Whitehead link fibers, $M_\Gamma$ also fibers. 
        
        The claim about containing a closed embedded essential surface follows from the work of Cooper and Long \cite{CooperLong93}. Suppose that $n = |\Gamma| > 1$, then we write $n =2k$ or $n=2k+1$ where $k\geq 1$ is an integer. The group $\pi_1(S^3\setminus C_n)$ has a surjection onto the free group of rank $k+1$ coming from deleting $k$ components when $n=2k$ and $k+1$ components when $n=2k+1$, see \cref{fig:WHL-AugmentedChainLink}. Therefore, $\pi_1(M_\Gamma)$ surjects a free group of rank $k+1$ for an appropriate $k$. Similar to \cite[Corollary 4.4]{CooperLong93}, the surjection shows that there exists a component of characters of irreducible $\SL_2(\mathbb{C})$-representations of dimension $3k$. The number of cusp of $S^3 \setminus C_n$ is $|\Gamma|+1 = n +1$. When $n=2k$, $3k$ is strictly greater than $n+1$ if and only if $k>1$. When $n =2k+1$, $3k$ is strictly greater than $n+1$ if and only if $k>2$. That is, if $|\Gamma|\not\in\{1,2,3,5\}$, then $S^3\setminus C_n$ satisfy the hypothesis of \cite[Theorem 4.1]{CooperLong93}. It follows from \cite[Theorem 4.1]{CooperLong93} that if $|\Gamma|\not\in\{1,2,3,5\}$, then $S^3\setminus C_n$, and hence $M_\Gamma$, contains a closed embedded essential surface.                        
        \end{proof}

\printbibliography

@misc{MiguelesPinskyPurcell,
      title={Arithmetic modular links}, 
      author={José Andrés Rodríguez Migueles and Tali Pinsky and Jessica S. Purcell},
      year={2023},
      eprint={2307.09409},
      archivePrefix={arXiv},
      primaryClass={math.GT}
}

@incollection {GhysICM06,
    AUTHOR = {Ghys, \'{E}tienne},
     TITLE = {Knots and dynamics},
 BOOKTITLE = {International {C}ongress of {M}athematicians. {V}ol. {I}},
     PAGES = {247--277},
 PUBLISHER = {Eur. Math. Soc., Z\"{u}rich},
      YEAR = {2007},
      ISBN = {978-3-03719-022-7},
   MRCLASS = {37-02 (37C27 37D40 37E99 37J05 53D35 57M25)},
  MRNUMBER = {2334193},
MRREVIEWER = {Michael\ J.\ Usher},
       DOI = {10.4171/022-1/11},
       URL = {https://doi.org/10.4171/022-1/11},
}

@article {MatsusakaUeki23,
    AUTHOR = {Matsusaka, Toshiki and Ueki, Jun},
     TITLE = {Modular knots, automorphic forms, and the {R}ademacher symbols
              for triangle groups},
   JOURNAL = {Res. Math. Sci.},
  FJOURNAL = {Research in the Mathematical Sciences},
    VOLUME = {10},
      YEAR = {2023},
    NUMBER = {1},
     PAGES = {Paper No. 4, 35},
      ISSN = {2522-0144,2197-9847},
   MRCLASS = {57M10 (11F20 11F37 57K10)},
  MRNUMBER = {4519799},
       DOI = {10.1007/s40687-022-00366-8},
       URL = {https://doi.org/10.1007/s40687-022-00366-8},
}

@article {BirmanWilliams,
    AUTHOR = {Birman, Joan S. and Williams, R. F.},
     TITLE = {Knotted periodic orbits in dynamical systems. {I}. {L}orenz's
              equations},
   JOURNAL = {Topology},
  FJOURNAL = {Topology. An International Journal of Mathematics},
    VOLUME = {22},
      YEAR = {1983},
    NUMBER = {1},
     PAGES = {47--82},
      ISSN = {0040-9383},
   MRCLASS = {58F13 (57M25)},
  MRNUMBER = {682059},
MRREVIEWER = {Hans\ G.\ Bothe},
       DOI = {10.1016/0040-9383(83)90045-9},
       URL = {https://doi.org/10.1016/0040-9383(83)90045-9},
}

@article {Dehornoy15,
    AUTHOR = {Dehornoy, Pierre},
     TITLE = {Geodesic flow, left-handedness and templates},
   JOURNAL = {Algebr. Geom. Topol.},
  FJOURNAL = {Algebraic \& Geometric Topology},
    VOLUME = {15},
      YEAR = {2015},
    NUMBER = {3},
     PAGES = {1525--1597},
      ISSN = {1472-2747,1472-2739},
   MRCLASS = {37D40 (37B50 37D45 53D25 57M20)},
  MRNUMBER = {3361144},
MRREVIEWER = {Rafael\ Oswaldo\ Ruggiero},
       DOI = {10.2140/agt.2015.15.1525},
       URL = {https://doi.org/10.2140/agt.2015.15.1525},
}

@book {MilnorKTheory,
    AUTHOR = {Milnor, John},
     TITLE = {Introduction to algebraic {$K$}-theory},
    SERIES = {Annals of Mathematics Studies},
    VOLUME = {No. 72},
 PUBLISHER = {Princeton University Press, Princeton, NJ; University of Tokyo
              Press, Tokyo},
      YEAR = {1971},
     PAGES = {xiii+184},
   MRCLASS = {18F25 (12A65)},
  MRNUMBER = {349811},
MRREVIEWER = {J.-P.\ Jouanolou},
}

@article {FoulonHasselBlatt13,
    AUTHOR = {Foulon, Patrick and Hasselblatt, Boris},
     TITLE = {Contact {A}nosov flows on hyperbolic 3-manifolds},
   JOURNAL = {Geom. Topol.},
  FJOURNAL = {Geometry \& Topology},
    VOLUME = {17},
      YEAR = {2013},
    NUMBER = {2},
     PAGES = {1225--1252},
      ISSN = {1465-3060,1364-0380},
   MRCLASS = {37D20},
  MRNUMBER = {3070525},
MRREVIEWER = {Meirong\ Zhang},
       DOI = {10.2140/gt.2013.17.1225},
       URL = {https://doi.org/10.2140/gt.2013.17.1225},
}

@article {BergeronPinskySilberman,
    AUTHOR = {Bergeron, Maxime and Pinsky, Tali and Silberman, Lior},
     TITLE = {An upper bound for the volumes of complements of periodic
              geodesics},
   JOURNAL = {Int. Math. Res. Not. IMRN},
  FJOURNAL = {International Mathematics Research Notices. IMRN},
      YEAR = {2019},
    NUMBER = {15},
     PAGES = {4707--4729},
      ISSN = {1073-7928,1687-0247},
   MRCLASS = {53C22 (58A32)},
  MRNUMBER = {3988666},
MRREVIEWER = {Paolo\ Piccione},
       DOI = {10.1093/imrn/rnx231},
       URL = {https://doi.org/10.1093/imrn/rnx231},
}

@article {CremaschiMiguelesYarmola,
    AUTHOR = {Cremaschi, Tommaso and Rodrigu\'{r}z-Migueles, Jos\'{e}
              Andr\'{e}s and Yarmola, Andrew},
     TITLE = {On volumes and filling collections of multicurves},
   JOURNAL = {J. Topol.},
  FJOURNAL = {Journal of Topology},
    VOLUME = {15},
      YEAR = {2022},
    NUMBER = {3},
     PAGES = {1107--1153},
      ISSN = {1753-8416,1753-8424},
   MRCLASS = {57K32 (57K10 57K30)},
  MRNUMBER = {4442684},
       DOI = {10.1112/topo.12246},
       URL = {https://doi.org/10.1112/topo.12246},
}

@article {CremaschiMigueles,
    AUTHOR = {Cremaschi, Tommaso and Rodr\'{\i}guez-Migueles, Jos\'{e} A.},
     TITLE = {Hyperbolicity of link complements in {S}eifert-fibered spaces},
   JOURNAL = {Algebr. Geom. Topol.},
  FJOURNAL = {Algebraic \& Geometric Topology},
    VOLUME = {20},
      YEAR = {2020},
    NUMBER = {7},
     PAGES = {3561--3588},
      ISSN = {1472-2747,1472-2739},
   MRCLASS = {57M50},
  MRNUMBER = {4194288},
MRREVIEWER = {Colin\ C.\ Adams},
       DOI = {10.2140/agt.2020.20.3561},
       URL = {https://doi.org/10.2140/agt.2020.20.3561},
}

@misc{Migueles,
      title={Periods of continued fractions and volumes of modular knots complements}, 
      author={José Andrés Rodríguez Migueles},
      year={2023},
      eprint={2008.12436},
      archivePrefix={arXiv},
      primaryClass={math.GT}
}

@article {CooperLong93,
    AUTHOR = {Cooper, D. and Long, D. D.},
     TITLE = {Derivative varieties and the pure braid group},
   JOURNAL = {Amer. J. Math.},
  FJOURNAL = {American Journal of Mathematics},
    VOLUME = {115},
      YEAR = {1993},
    NUMBER = {1},
     PAGES = {137--160},
      ISSN = {0002-9327,1080-6377},
   MRCLASS = {57M07 (20F36 57M05)},
  MRNUMBER = {1209237},
MRREVIEWER = {Athanase\ Papadopoulos},
       DOI = {10.2307/2374725},
       URL = {https://doi.org/10.2307/2374725},
}

@article {Baker08,
    AUTHOR = {Baker, Kenneth L.},
     TITLE = {Surgery descriptions and volumes of {B}erge knots. {I}.
              {L}arge volume {B}erge knots},
   JOURNAL = {J. Knot Theory Ramifications},
  FJOURNAL = {Journal of Knot Theory and its Ramifications},
    VOLUME = {17},
      YEAR = {2008},
    NUMBER = {9},
     PAGES = {1077--1097},
      ISSN = {0218-2165,1793-6527},
   MRCLASS = {57M50 (57M25)},
  MRNUMBER = {2457837},
MRREVIEWER = {Mattia\ Mecchia},
       DOI = {10.1142/S0218216508006518},
       URL = {https://doi.org/10.1142/S0218216508006518},
}

@misc{SnapPy,
     author={Culler, Marc and Dunfield, Nathan M. and Goerner,
     Matthias and Weeks, Jeffrey R.},
     title={Snap{P}y, a computer program for studying the geometry and topology of $3$-manifolds},
     howpublished={Available at \url{http://snappy.computop.org} (DD/MM/YYYY)},
}
\end{document}